\documentclass[12pt,reqno]{amsart}

\textheight=21truecm
\textwidth=15truecm
\voffset=-1cm
\hoffset=-1cm

\usepackage{color}
\usepackage{amsmath, amsthm, amssymb}
\usepackage{amsfonts}

\usepackage[ansinew]{inputenc}
\usepackage[dvips]{epsfig}
\usepackage{graphicx}
\usepackage[english]{babel}
\usepackage{hyperref}

\newtheorem{thm}{Theorem}[section]

\newtheorem{prop}[thm]{Proposition}
\newtheorem{remark}[thm]{Remark}

\newtheorem{lem}[thm]{Lemma}
\newtheorem{lemma}[thm]{Lemma}
\newtheorem{cor}[thm]{Corollary}

\theoremstyle{definition}
\newtheorem{defi}[thm]{Definition}
\thispagestyle{empty}

\def\RR{{\mathbb R}}
\def\R{{\mathbb R}}

\begin{document}

\title[Entire solutions to semilinear nonlocal equations in $\RR^2$]{Entire solutions to semilinear \\ nonlocal equations in $\RR^2$}

\author[X. Ros-Oton]{Xavier Ros-Oton}
\address{The University of Texas at Austin, Department of Mathematics, 2515 Speedway, Austin, TX 78751, USA}
\email{ros.oton@math.utexas.edu}

\author[Y. Sire]{Yannick Sire}
\address{Universit\'e Aix-Marseille, i2m, Marseille, France}
\email{yannick.sire@univ-amu.fr}

\keywords{Integro-differential equations; stable solutions, de Giorgi conjecture.}

\maketitle

\begin{abstract}
We consider entire solutions to $L u= f(u)$ in $\RR^2$, where $L$ is a general nonlocal operator with kernel $K(y)$. Under certain natural assumtions on the operator $L$, we show that any stable solution is a 1D solution.
In particular, our result applies to any solution $u$ which is monotone in one direction. Compared to other proofs of the De Giorgi type results on nonlocal equations, our method is the first successfull attempt to use the Liouville theorem approach to get flatness of the level sets.
\end{abstract}

\tableofcontents

\section{Introduction}

This paper is concerned with the study of bounded solutions to semilinear equations
\begin{equation}\label{pb}
L u = f(u)\,\,\,\mbox{in}\,\,\RR^2
\end{equation}
for nonlocal elliptic operators of the form
\begin{equation}\label{L}
L u(x)=\textrm{PV}\int_{\RR^2} \bigl( u(x)-u(x+y) \bigr) K(y)\,dy.
\end{equation}
More precisely, we study the 1D symmetry of stable solutions to \eqref{pb}.

When $L$ is the Laplacian $-\Delta$, the interest in this type of problems goes back to a conjecture of de Giorgi \cite{DeG}, and several works have been devoted to study
the Allen-Cahn equation
\[-\Delta u=u-u^3\,\,\,\mbox{in}\,\,\RR^n.\]
Solutions to this problem are by now quite well understood. Indeed, in dimension $2$, Ghoussoub and Hui proved that any monotone solution is 1D (see \cite{GG}); in dimension $3$ it has been proved by Ambrosio and Cabr\'e in \cite{AC}. For dimensions between $4$ and $8$, under a natural assumption on the limit profiles, Savin proved the conjecture \cite{savin}.

On the other hand, in the last years several works have been devoted to the study of semilinear nonlocal equations of the type
\[(-\Delta)^s u = f(u)\,\,\,\mbox{in}\,\,\RR^n.\]
Here, $(-\Delta)^s$ is the fractional Laplacian, which corresponds to $K(y)=c|y|^{-n-2s}$ in~\eqref{L}, $s \in (0,1)$.
In particular, in the paper \cite{sire}, one of the authors and Valdinoci proved that any bounded stable solution of the previous equation in $n=2$ is one-dimensional.
Another proof of this result, established independently at the same time, can be found also in \cite{cabS2}. This is the idea of this latter proof that we will follow. 
In the papers \cite{CabC,cabre}, Cabr\'e and Cinti extended this result to $n=3$ for $s\geq\frac12$.

These results are all based on the extension problem for $(-\Delta)^s$, which transforms the nonlocal problem in $\R^n$ into a local one in $\R^{n+1}_+$ (see \cite{cafSil}). 
However, no result was known for any other nonlocal operator of the form \eqref{L}.

The novelty of our approach is that it does not use the so-called Caffarelli-Silvestre extension. 
We have been aware, while we were writing the paper, that Cinti, Serra and Valdinoci have another proof of our result using a quantitative stability argument (see \cite{CSV}).

The goal of the present paper is to establish this type of symmetry result in two dimensions for a class of nonlocal operators of the form \eqref{L}.

\subsection*{Assumptions on the kernel}

We make the following assumptions on $L$:
\begin{itemize}
\item[(H1)] The operator $L$ is of the form \eqref{L}, with the kernel $K$ satisfying $K \geq 0$ and $K(y)=K(-y)$.
Moreover, $K$ has \emph{compact support} in $B_1$
\[K\equiv0\quad\textrm{in}\ \R^n\setminus B_1,\]
and
\[\int_{B_1}|y|^2 K(y)dy \leq C.\]

\item[(H2)] The operator $L$ satisfies the following \emph{Harnack inequality}: If $\varphi>0$ is a solution to $L\varphi+c(x)\varphi=0$ in $\R^2$, with $c(x)\in L^\infty(\R^n)$, then
    \[\sup_{B_1(x_0)}\varphi \leq C \inf_{B_1(x_0)}\varphi\]
    for any $x_0\in \R^2$.

\item[(H3)] The following \emph{H\"older estimate} holds: If $w$ is a bounded solution to $Lw=g$ in $B_1$, with $g\in L^\infty(B_1)$, then
\[\|w\|_{C^\alpha(B_{1/2})}\leq C\bigl(\|g\|_{L^\infty(B_1)}+\|w\|_{L^\infty(\R^n))}\bigr)\]
for some constants $\alpha>0$ and $C$.
Moreover, the space $H_K(\R^2)$, defined as the closure of $C^\infty_c(\R^2)$ under the norm
\[\|w\|_{H_K(\R^2)}^2=\frac12\int_{\R^2}\int_{\R^2}\bigl(w(x)-w(x+y) \bigr)^2 K(y)\,dy\,dx,\]
is compactly embedded in $L^2(\R^2)$.
\end{itemize}

As we will see, assumptions (H1) and (H2) are important for our purposes.
The assumption (H3) is mainly to make sure that solutions $u$ satisfy $|\nabla u|\in L^\infty(\R^2)$, and to prove the equivalence between the two definitions of stability below.

As said before, we consider stable solutions.
As in the classical case of local equations, we have two equivalent definitions of stability: a variational one, and a non-variational one.
We will show in Lemma \ref{linear} that, thanks to (H2)-(H3), these two definitions are equivalent.

\begin{defi}\label{defi-stable}
Assume that (H2)-(H3) hold.

A bounded solution $u$ to \eqref{pb} is said to be \emph{stable} if
$$
\frac12\int_{\R^2}\int_{\R^2}\bigl( \psi(x)-\psi(x+y) \bigr)^2 K(y)\,dy\,dx \geq \int_{\R^2} f'(u)\psi^2
$$
for every $\psi \in C^\infty_c(\R^2)$.

Equivalently, $u$ is said to be stable if there exists a bounded solution $\varphi>0$ to
$$
L \varphi =f'(u)\varphi\quad\textrm{in}\ \R^2
$$
i.e. $\varphi$ solves the linearization of the problem \eqref{pb}.
\end{defi}

Our main result is the following.

\begin{thm}\label{thm1}
Let $L$ be given by \eqref{L}, and assume that (H1)-(H2)-(H3) hold.
Let $f$ be a locally Lipschitz nonlinearity, and $u$ be any bounded stable solution to \eqref{pb}.

Then, $u$ is a 1D function, i.e., $u(x)=w(x\cdot a)$ for some $a\in \mathbb S^1$.
\end{thm}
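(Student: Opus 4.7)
The plan is to adapt the Liouville-theorem approach of \cite{cabS2} directly to the general nonlocal setting, without invoking the Caffarelli--Silvestre extension. By Lemma \ref{linear} the two formulations of stability are equivalent, so we fix a positive bounded solution $\varphi$ of $L\varphi=f'(u)\varphi$. Since $L$ has translation-invariant kernel and (H3), together with the local Lipschitz assumption on $f$, gives enough regularity (bootstrap from $C^{\alpha}$ to $C^{1,\alpha}$) to differentiate the equation, every directional derivative $u_e=\partial_e u$ is a bounded solution of the same linearised equation $Lu_e = f'(u) u_e$ as $\varphi$. The theorem will follow once we show that $u_e = c_e\varphi$ for a constant $c_e\in\R$; applied to two independent directions this gives $\nabla u(x) = \varphi(x)\,a$ for a fixed vector $a\in\R^2$, and hence $u$ is constant in every direction orthogonal to $a$, i.e.\ one-dimensional.

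To extract the identity $u_e = c_e\varphi$, set $\sigma=u_e/\varphi$, which is well defined by the positivity of $\varphi$. The nonlocal product formula
\[
L(\sigma\varphi)(x) = \sigma(x)\,L\varphi(x) + \mathrm{PV}\!\int_{\R^2}\varphi(x+y)\bigl[\sigma(x)-\sigma(x+y)\bigr]K(y)\,dy,
\]
combined with $L\varphi=f'(u)\varphi$ and $L(\sigma\varphi)=Lu_e=f'(u)\sigma\varphi$, collapses to
\[
\mathrm{PV}\!\int_{\R^2}\varphi(x+y)\bigl[\sigma(x+y)-\sigma(x)\bigr]K(y)\,dy = 0.
\]
This is the Euler--Lagrange equation of the symmetric weighted energy $\mathcal{E}(\sigma)=\tfrac12\iint(\sigma(x)-\sigma(x+y))^2\varphi(x)\varphi(x+y)K(y)\,dy\,dx$, the natural weighted analogue of the Dirichlet form attached to $L$.

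The central step is a Caccioppoli-type estimate. Testing the weighted equation against $\eta_R^{\,2}\sigma$, where $\eta_R$ is the two-dimensional logarithmic Ghoussoub--Gui cutoff, and symmetrising in $(x,x+y)$ via $y\mapsto -y$ leads, after absorbing a cross term by Cauchy--Schwarz, to
\begin{align*}
&\iint \eta_R(x)\eta_R(x+y)\bigl(\sigma(x)-\sigma(x+y)\bigr)^2\varphi(x)\varphi(x+y)K(y)\,dy\,dx \\
&\qquad \le C\iint \bigl(\eta_R(x)-\eta_R(x+y)\bigr)^2\bigl(\sigma(x)^2+\sigma(x+y)^2\bigr)\varphi(x)\varphi(x+y)K(y)\,dy\,dx.
\end{align*}
Here the hypotheses combine decisively: because $K$ is supported in $B_1$ by (H1), the Harnack inequality (H2) yields $\varphi(x+y)\le C\varphi(x)$ on the support of the integrand, so $\sigma(x)^2\varphi(x)\varphi(x+y) = u_e(x)^2\,\varphi(x+y)/\varphi(x)\le C\|u_e\|_{L^\infty}^{\,2}$. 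Hence the right-hand side is bounded by $C\|u_e\|_{L^\infty}^{\,2}\int |y|^2K(y)\,dy\,\cdot\,\|\nabla\eta_R\|_{L^2(\R^2)}^{\,2}$ after the pointwise estimate $(\eta_R(x)-\eta_R(x+y))^2\le|y|^2\int_0^1|\nabla\eta_R(x+ty)|^2\,dt$, and the choice of $\eta_R$ gives $\|\nabla\eta_R\|_{L^2(\R^2)}\to 0$ as $R\to\infty$. Passing to the limit by monotone convergence yields $\mathcal{E}(\sigma)=0$, and the positivity and non-degeneracy of the kernel together with continuity of $\sigma$ (a consequence of H3) force $\sigma$ to be constant.

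I expect the main obstacle to be the careful bookkeeping in the Caccioppoli step: the cross terms produced by the non-symmetric decomposition $\eta_R^{\,2}(x)\sigma(x)-\eta_R^{\,2}(x+y)\sigma(x+y)$ must be controlled using only the pointwise Harnack comparison $\varphi(x+y)\le C\varphi(x)$, since the nonlocal framework gives no access to $\nabla\varphi$. A secondary technical point is to check that the weighted equation for $\sigma$ is valid in a variational sense compatible with testing against $\eta_R^{\,2}\sigma$, which is precisely why the full regularity package of (H3) (including the compact embedding of $H_K(\R^2)$ into $L^2(\R^2)$) is needed rather than pointwise bounds alone.
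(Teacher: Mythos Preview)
Your argument is correct and proceeds along the same general lines as the paper---derive a weighted ``equation'' for $\sigma=u_e/\varphi$, test against $\eta_R^{\,2}\sigma$, symmetrise, and use the compact support of $K$ together with the Harnack inequality (H2) to reduce the error term to a pure cutoff integral---but the concluding step is genuinely different.

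The paper uses a \emph{standard} cutoff with $\eta\equiv 1$ on $B_R$, $\eta\equiv 0$ outside $B_{2R}$, $|\nabla\eta|\le C/R$. With this choice the error integral in Lemma~\ref{key-bound} is only \emph{bounded} uniformly in $R$ (since $|B_{3R}|/R^2\le C$ in dimension~2), not tending to zero. Consequently the paper needs the Ambrosio--Cabr\'e iteration: one first deduces $I(R)\le C$, hence the full weighted energy $\iint(\sigma(x)-\sigma(z))^2\varphi(x)\varphi(z)K(x-z)\,dx\,dz<\infty$; then one feeds this finiteness back into the Cauchy--Schwarz estimate \eqref{Ambrosio-Cabre}, noting that the first factor is now supported on $\{\eta(x)\neq\eta(z)\}$ and therefore vanishes as $R\to\infty$ by dominated convergence.

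You instead take the \emph{logarithmic} Ghoussoub--Gui cutoff, for which $\|\nabla\eta_R\|_{L^2(\R^2)}^2\sim C/\log R\to 0$, so that the error term vanishes directly and no second pass is needed. This is slightly cleaner. Conversely, the paper's Ambrosio--Cabr\'e scheme is more robust in situations where only a uniform bound (and not decay) of the error is available.

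Two small remarks. First, when you pass to the limit on the left-hand side, Fatou's lemma suffices; the family of logarithmic cutoffs need not be monotone in $R$, so ``monotone convergence'' is not quite the right invocation. Second, your closing comment slightly misattributes the role of (H3): the compact embedding $H_K\hookrightarrow L^2$ is used in Lemma~\ref{linear} to \emph{produce} the positive supersolution $\varphi$, not to justify testing against $\eta_R^{\,2}\sigma$; the testing step itself only needs $|\nabla u|\in L^\infty$ and the pointwise Harnack comparison, exactly as in the paper.
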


Estimates of the form (H3) have been widely studied, and are known for many different classes of kernels; see the results of Silvestre \cite{silvestreIndiana} and also Kassmann-Mimica \cite{KM}.
Still, in order to keep our results cleaner, we prefer to state our result under the only assumptions (H1)-(H2)-(H3).

Similarly, Harnack inequalities have been widely studied and are known for different classes of kernels $K(y)$; see for instance a rather general form of the Harnack inequality in \cite{CKP}.
Notice that in our case, we need an Harnack inequality with a zero order term in the equation.
It has been proved when the integral operator is the pure fractional Laplacian in \cite{CabS1} and refined in \cite{TX}.
It is by now well known that the Harnack inequality may fail depending on the kernel $K$ under consideration, and a characterization of the classes of kernels for which it holds is out of the scope of this paper.
Thus, in order to keep the statements of our results clean, we have decided to state them under the general assumptions (H1)-(H2)-(H3).

\vspace{3mm}

An important example of stable solutions are monotone solutions, i.e., solutions $u$ for which $\partial_e u>0$ for some direction $e$.
Indeed, one just has to take $\varphi=\partial_e u$ in Definition \ref{defi-stable}.

As a direct consequence of Theorem \ref{thm1}, we find the following.

\begin{cor}\label{cor1}
Let $L$ be given by \eqref{L}, and assume that (H1)-(H2)-(H3) hold.
Let $f$ be a locally Lipschitz nonlinearity, and $u$ be any bounded solution to \eqref{pb}.

If $\partial_e u>0$ for some direction $e$, then $u$ is a 1D function.
\end{cor}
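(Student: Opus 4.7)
The plan is to reduce the corollary to Theorem \ref{thm1} by showing that any bounded monotone solution $u$ is stable in the sense of Definition \ref{defi-stable}. Since that definition (in its non-variational form) asks for a bounded positive solution $\varphi$ to the linearized equation $L\varphi=f'(u)\varphi$, the natural candidate is $\varphi:=\partial_e u$, which is positive by hypothesis. The task thus reduces to (a) making sense of $\partial_e u$ as a bounded function, and (b) verifying that it solves the linearized equation.

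For (a), I would first note that $Lu=f(u)$ is bounded in $\R^2$ because $u$ is bounded and $f$ is locally Lipschitz. By (H3) applied in arbitrary balls (with a rescaling/translation argument using that $K$ is compactly supported in $B_1$), $u\in C^\alpha_{\mathrm{loc}}(\R^2)$, and then bootstrapping (a standard iteration, or a direct difference-quotient argument exploiting the translation invariance of $L$) upgrades this to $\nabla u\in L^\infty(\R^2)$. In particular $\varphi:=\partial_e u$ is a bounded function, and by assumption $\varphi>0$ in $\R^2$.

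For (b), differentiate $Lu=f(u)$ in the direction $e$. Using the difference quotients $u_h(x):=\bigl(u(x+he)-u(x)\bigr)/h$, one has $Lu_h=\bigl(f(u(x+he))-f(u(x))\bigr)/h$ in $\R^2$. Passing to the limit $h\to 0$ is justified by dominated convergence in the integral defining $L$: the integrand $\bigl(u_h(x)-u_h(x+y)\bigr)K(y)$ is dominated by $\|\nabla u\|_\infty|y|\,K(y)$, which is integrable near $0$ by (H1) (finite second moment, hence finite first moment since $K$ has compact support) and identically zero outside $B_1$. The right-hand side converges a.e.\ to $f'(u)\partial_e u$ since $f$ is locally Lipschitz, so one obtains $L\varphi=f'(u)\varphi$ in $\R^2$ in the appropriate (pointwise or distributional) sense. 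Combined with $\varphi>0$ and $\varphi\in L^\infty(\R^2)$, this is exactly the non-variational stability condition, and Lemma \ref{linear} converts it to the variational one if needed. Theorem \ref{thm1} then yields that $u$ is 1D.

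The only mildly delicate step is the regularity claim $\nabla u\in L^\infty$: one must be sure that (H3), which is a local Hölder estimate with right-hand side in $L^\infty$, suffices to produce a Lipschitz bound on $u$ on all of $\R^2$. Since $L$ commutes with translations and $u,f(u)$ are globally bounded, this follows by applying (H3) to the translates $u(\cdot+h)-u(\cdot)$ and iterating the Hölder gain, or simply by invoking the interior $C^{1,\alpha}$-type estimates that accompany (H3) in the references cited after Theorem \ref{thm1}. Once this is in place, the rest of the argument is essentially a one-line computation.
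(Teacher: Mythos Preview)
Your proposal is correct and follows exactly the paper's approach: as stated just before the corollary, one takes $\varphi=\partial_e u>0$, which is bounded (this is precisely what (H3) is assumed to guarantee) and solves the linearized equation $L\varphi=f'(u)\varphi$, so $u$ is stable and Theorem~\ref{thm1} applies. The paper even notes in a remark that for monotone solutions one may bypass (H3) and Lemma~\ref{linear} entirely by plugging $\varphi=\partial_e u$ directly into the argument of Propositions~\ref{Step1}--\ref{Step2}.
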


In the next Section we prove our main result, Theorem \ref{thm1}, and Corollary \ref{cor1}.
After this, we prove in Section \ref{sec3} the equivalence between the two definitions of stability.

\section{Proof of the main result}
\label{sec2}

Theorem \ref{thm1} will follow from the following two results.
The first one gives an equation for
\begin{equation}\label{sigma}
\sigma=\frac{\partial_{x_i}u}{\varphi},
\end{equation}
where $u$ is the solution of Theorem \ref{thm1} and $\varphi>0$ is given by Definition \ref{defi-stable}.

\begin{prop}\label{Step1}
Let $L$, $u$, and $\varphi$ be as in Theorem \ref{thm1}, and let $\sigma$ be defined by~\eqref{sigma}.
Then,
\begin{equation}\label{strange-equation}
\int_{\R^2}\int_{\R^2}\eta^2(x)\sigma(x)\bigl(\sigma(x)-\sigma(z)\bigr)\varphi(x)\varphi(z)K(x-z)dx\,dx=0
\end{equation}
for all $\eta\in C^\infty_c(\R^2)$.
\end{prop}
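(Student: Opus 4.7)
My plan is to reduce the proposition to a pointwise identity for $\sigma$ that comes from comparing the linearized equations satisfied by $\partial_{x_i}u$ and by $\varphi$. First, since $K(y)$ depends only on the increment, the operator $L$ is translation invariant, so differentiating \eqref{pb} in $x_i$ yields $L(\partial_{x_i} u)=f'(u)\partial_{x_i}u$; this requires $\partial_{x_i}u$ to be well-defined and bounded, which follows from (H3) as noted in the text. Setting $v=\partial_{x_i}u=\sigma\varphi$, we therefore have the two linearized equations
\[
L(\sigma\varphi)=f'(u)\sigma\varphi,\qquad L\varphi=f'(u)\varphi.
\]

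Next, I would write a nonlocal ``product rule'' for the action of $L$ on the product $\sigma\varphi$. By adding and subtracting $\sigma(x)\varphi(x+y)$ inside the principal value integrand
\[
\sigma(x)\varphi(x)-\sigma(x+y)\varphi(x+y)
=\sigma(x)\bigl[\varphi(x)-\varphi(x+y)\bigr]+\varphi(x+y)\bigl[\sigma(x)-\sigma(x+y)\bigr],
\]
and integrating against $K(y)$, one obtains the pointwise decomposition
\[
L(\sigma\varphi)(x)=\sigma(x)\,L\varphi(x)+\int_{\R^2}\bigl[\sigma(x)-\sigma(x+y)\bigr]\varphi(x+y)\,K(y)\,dy.
\]
Plugging in the two linearized equations, the $f'(u)$ terms cancel, and one is left with the pointwise identity
\[
\int_{\R^2}\bigl[\sigma(x)-\sigma(x+y)\bigr]\varphi(x+y)\,K(y)\,dy=0\qquad\text{for every }x\in\R^2.
\]

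To finish, I would multiply this identity by $\eta^2(x)\sigma(x)\varphi(x)$ and integrate in $x$ over $\R^2$. A change of variables $z=x+y$ in the inner integral, combined with the symmetry $K(y)=K(-y)$ from (H1), converts $K(y)=K(z-x)=K(x-z)$ and gives exactly the claimed identity \eqref{strange-equation} (the $dx\,dx$ in the statement being a typo for $dz\,dx$).

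The only real technical point is justifying the manipulations: differentiating the equation under the integral sign is legitimate because $K$ has compact support in $B_1$ by (H1), so the differentiation reduces to an integral over a bounded set, with $u$ and its derivatives bounded by virtue of (H3); the quotient $\sigma=\partial_{x_i}u/\varphi$ is meaningful and locally bounded because the Harnack inequality (H2) gives local lower bounds for the positive solution $\varphi$; and convergence of the final double integral is automatic because $\eta$ is compactly supported, restricting $x$ to a compact set, while $K(x-z)$ restricts $|x-z|\le 1$, so the effective domain of integration is compact and all factors are locally bounded. These are the items I would verify carefully, but the heart of the proof is the algebraic product-rule cancellation above.
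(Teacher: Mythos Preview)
Your proof is correct and follows essentially the same route as the paper: derive the pointwise identity $\int_{\R^2}\bigl(\sigma(x)-\sigma(z)\bigr)\varphi(z)K(x-z)\,dz=0$ from a nonlocal product rule for $L(\sigma\varphi)$ combined with the two linearized equations, then multiply by $\eta^2(x)\sigma(x)\varphi(x)$ and integrate. The only cosmetic difference is that the paper writes the product rule in the symmetric form $L(\sigma\varphi)=\sigma L\varphi+\varphi L\sigma-I(\varphi,\sigma)$ before simplifying, whereas you go directly to the asymmetric decomposition; the algebra and the conclusion are identical.
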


The second one is the following.

\begin{prop}\label{Step2}
Let $L$, $u$, and $\varphi$ be as in Theorem \ref{thm1}, and let $\sigma$ be defined by~\eqref{sigma}.
Assume \eqref{strange-equation} holds for all $\eta\in C^\infty_c(\R^2)$.
Then, $\sigma$ is constant.
\end{prop}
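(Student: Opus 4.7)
The plan is to exploit the symmetry $K(y)=K(-y)$ to turn the one-sided identity \eqref{strange-equation} into a Caccioppoli-type inequality for $\sigma$ weighted by $\varphi$, and then to kill its right-hand side with a two-dimensional logarithmic cutoff. This is the nonlocal counterpart of the Sternberg--Zumbrun / Cabr\'e--Capella argument used in the local and fractional settings.

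The first step is symmetrization. I would swap $x\leftrightarrow z$ in \eqref{strange-equation} and add the resulting identity to the original one; since $\varphi(x)\varphi(z)K(x-z)$ is invariant under the swap by (H1), the sum collapses to
\[
\int_{\R^2}\!\!\int_{\R^2}(\sigma(x)-\sigma(z))\bigl[\eta^2(x)\sigma(x)-\eta^2(z)\sigma(z)\bigr]\varphi(x)\varphi(z) K(x-z)\,dx\,dz = 0.
\]
Writing $\eta^2(x)\sigma(x)-\eta^2(z)\sigma(z)=\eta^2(x)(\sigma(x)-\sigma(z))+\sigma(z)(\eta^2(x)-\eta^2(z))$, this becomes $A(\eta)=-J(\eta)$ where
\[
A(\eta):=\int_{\R^2}\!\!\int_{\R^2}\eta^2(x)(\sigma(x)-\sigma(z))^2\varphi(x)\varphi(z) K(x-z)\,dx\,dz
\]
and $J(\eta)$ is the remaining cross term. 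Factoring $\eta^2(x)-\eta^2(z)=(\eta(x)-\eta(z))(\eta(x)+\eta(z))$ in $J$, applying Cauchy--Schwarz, and using $(\eta(x)+\eta(z))^2\leq 2(\eta^2(x)+\eta^2(z))$ together with the $x\leftrightarrow z$ symmetry (which turns $\eta^2(z)$ into $\eta^2(x)$ inside $A$), I would arrive at the Caccioppoli-type bound
\[
A(\eta)\leq 4\int_{\R^2}\!\!\int_{\R^2}(\eta(x)-\eta(z))^2\sigma^2(z)\varphi(x)\varphi(z) K(x-z)\,dx\,dz=:B(\eta).
\]

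The second step bounds $B(\eta)$. Note $\sigma\varphi=\partial_{x_i}u\in L^\infty(\R^2)$ by (H3), and Harnack (H2) applied to $\varphi$ (which solves $L\varphi-f'(u)\varphi=0$) gives $\varphi(x)/\varphi(z)\leq C$ whenever $|x-z|\leq 1$. Thus on the support of $K(x-z)$ the weight is bounded: $\sigma^2(z)\varphi(x)\varphi(z)=(\partial_{x_i}u(z))^2\,\varphi(x)/\varphi(z)\leq C$. Combining $(\eta(x)-\eta(z))^2\leq |x-z|^2\int_0^1|\nabla\eta(x+t(z-x))|^2\,dt$ with the second-moment bound $\int_{B_1}|y|^2 K(y)\,dy<\infty$ from (H1), Fubini then gives $B(\eta)\leq C\int_{\R^2}|\nabla\eta|^2\,dx$. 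Plugging in the standard two-dimensional logarithmic cutoff $\eta_R$ (equal to $1$ on $B_R$, to $\log(R^2/|x|)/\log R$ on the annulus, and to $0$ outside $B_{R^2}$) yields $\int|\nabla\eta_R|^2\leq C/\log R\to 0$, so $A(\eta_R)\leq 4 B(\eta_R)\to 0$. Monotone convergence ($\eta_R^2\uparrow 1$) then forces
\[
\int_{\R^2}\!\!\int_{\R^2}(\sigma(x)-\sigma(z))^2\varphi(x)\varphi(z) K(x-z)\,dx\,dz=0,
\]
so $\sigma(x)=\sigma(z)$ for a.e.\ $(x,z)$ with $K(x-z)>0$ (using $\varphi>0$ from Harnack). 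Combining this with the continuity of $\sigma$ (inherited from (H3) applied to $\partial_{x_i}u$ and $\varphi$) and with the non-degeneracy of $K$ implicit in (H2)---which in particular forces $K$ to be positive in a neighbourhood of the origin---I would conclude that $\sigma$ is locally constant, hence constant on the connected set $\R^2$.

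I expect the main obstacle to be the uniform bound $\sigma^2(z)\varphi(x)\varphi(z)\leq C$ on $\{|x-z|\leq 1\}$: without the Harnack ratio control the weight could blow up near zeros of $\varphi$ and the whole scheme collapses. This is the point where (H2) is genuinely used, while (H1) and (H3) serve the cutoff and regularity aspects. The logarithmic cutoff giving $1/\log R$ is likewise specific to $\R^2$; the same argument would only give a bounded right-hand side in higher dimensions, which explains why the theorem is confined to two dimensions.
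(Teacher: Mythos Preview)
Your argument is correct and reaches the same conclusion, but the route is genuinely different from the paper's. After the same symmetrization step, the paper uses the \emph{symmetric} splitting
\[
2\bigl(\eta^2(x)\sigma(x)-\eta^2(z)\sigma(z)\bigr)=(\sigma(x)-\sigma(z))(\eta^2(x)+\eta^2(z))+(\sigma(x)+\sigma(z))(\eta^2(x)-\eta^2(z)),
\]
takes a \emph{standard} cutoff $\eta$ (equal to $1$ on $B_R$, $0$ outside $B_{2R}$, $|\nabla\eta|\le C/R$), and applies Cauchy--Schwarz to the resulting identity. The key input is their Lemma~\ref{key-bound}, which shows that
\[
\int_{\R^2}\!\!\int_{\R^2}(\sigma(x)+\sigma(z))^2(\eta(x)-\eta(z))^2\varphi(x)\varphi(z)K(x-z)\,dx\,dz\le C
\]
uniformly in $R$; this is exactly where the two--dimensionality enters for them, via $|B_{3R}|/R^2\le C$. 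They then run the Ambrosio--Cabr\'e two--pass trick: first deduce $I(R)\le C$, pass to the limit to get the full double integral is finite, and then feed this back into the Cauchy--Schwarz estimate to force $I(R)\to 0$. By contrast, you use the \emph{asymmetric} splitting to obtain a one--step Caccioppoli inequality $A(\eta)\le 4B(\eta)$, bound $B(\eta)\le C\int_{\R^2}|\nabla\eta|^2$ by the same Harnack/second--moment ingredients, and then exploit the logarithmic cutoff to make the right--hand side vanish directly. Your approach is closer in spirit to Ghoussoub--Gui and is arguably more elementary; the paper's Ambrosio--Cabr\'e scheme is the one that, in the local theory, opens the door to dimension three once a suitable energy bound replaces Lemma~\ref{key-bound}. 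Both arguments use (H1) and (H2) at exactly the same spot (the pointwise control of $\sigma^2\varphi\varphi$ on $\{|x-z|\le1\}$), and both are confined to $\R^2$ for the same reason.
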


We first prove Proposition \ref{Step1}.

\begin{proof}[Proof of Proposition \ref{Step1}]
First, notice that
\[L(\sigma \varphi)=\sigma L \varphi +\varphi L \sigma -I(\varphi, \sigma)\]
where
\[I(\varphi, \sigma)(x):=\int_{\R^2} \bigl(\sigma(x)-\sigma(z)\bigr)\bigl(\varphi(x)-\varphi(z)\bigr) K(x-z)\,dz.\]
On the other hand, since $\sigma\varphi=\partial_{x_i}u$, then
\[L(\sigma \varphi)=L\partial_{x_i}u=f'(u)\partial_{x_i} u\]
Moreover, since $\varphi$ solves $L\varphi=f'(u)\varphi$, then
\[\sigma L \varphi=f'(u)\partial_{x_i}u.\]

Hence, we end up with the equation
\[\begin{split}
0&=\varphi\,L\sigma-I(\varphi,\sigma)\\
&= \varphi(x)\int_{\R^2} \bigl(\sigma(x)-\sigma(z)\bigr)K(x-z)\,dz\\
&\qquad\qquad -\int_{\R^2} \bigl(\sigma(x)-\sigma(z)\bigr)\bigl(\varphi(x)-\varphi(z)\bigr) K(x-z)\,dz\\
&= \int_{\R^2}\bigl(\sigma(x)-\sigma(z)\bigr)\varphi(x)\varphi(z)K(x-z)dz.
\end{split}\]
Multiplying by $\eta^2(x)\sigma(x)$ and integrating in $x$, we find that $\sigma$ satisfies \eqref{strange-equation}.
\end{proof}

To prove Proposition \ref{Step2}, we will need the following.

\begin{lem}\label{key-bound}
Let $L$, $u$, and $\varphi$ be as in Theorem \ref{thm1}, and let $\sigma=\partial_{x_i}u/\varphi$.

Let $R>1$, and let $\eta\in C^\infty(\R^2)$ be such that
\[\eta\equiv1\quad\textrm{in}\ B_R,\qquad \eta\equiv0\quad\textrm{in}\ \R^2\setminus B_{2R},\qquad |\nabla \eta|\leq \frac{C}{R}.\]
Then,
\[\int_{\R^2}\int_{\R^2}\bigl(\sigma(x)+\sigma(z)\bigr)^2\bigl(\eta(x)-\eta(z)\bigr)^2\varphi(x)\varphi(z)K(x-z)dx\,dz \leq C\]
for some constant $C$ independent of $R$.
\end{lem}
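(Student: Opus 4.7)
The plan is to bound the double integral by absorbing the $\sigma$-$\varphi$ factor into a pointwise constant on the support of $K(x-z)$, and then to exploit the cutoff structure of $\eta$ together with the second moment bound on $K$ from (H1).

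First I would note that $\sigma\varphi=\partial_{x_i}u$ is globally bounded: since $u$ is a bounded solution of $Lu=f(u)$ and $f$ is locally Lipschitz, hypothesis (H3) (applied to difference quotients, as the paper indicates) gives $|\nabla u|\in L^\infty(\R^2)$, so $\sigma(x)^2\varphi(x)^2\le C$ for every $x$. Because $K$ is supported in $B_1$ by (H1), the integrand is nonzero only for $z\in B_1(x)$. On this range, the Harnack inequality (H2) applied to the positive solution $\varphi$ of $L\varphi + c(x)\varphi=0$ with $c=-f'(u)\in L^\infty$ yields $\varphi(z)\le C_H\,\varphi(x)$. Therefore
\[
\sigma(x)^2\varphi(x)\varphi(z)\;=\;\sigma(x)^2\varphi(x)^2\cdot\frac{\varphi(z)}{\varphi(x)}\;\le\;C
\]
pointwise on the support of $K(x-z)$. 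Using $(\sigma(x)+\sigma(z))^2\le 2\sigma(x)^2+2\sigma(z)^2$ and the symmetry $x\leftrightarrow z$ (which relies on $K(y)=K(-y)$ from (H1)), it then suffices to bound
\[
\iint_{\R^2\times\R^2}(\eta(x)-\eta(z))^2 K(x-z)\,dx\,dz.
\]

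For this remaining cutoff integral, the bound $|\nabla\eta|\le C/R$ gives $|\eta(x)-\eta(z)|\le (C/R)|x-z|$, and the support properties $\eta\equiv 1$ on $B_R$ and $\eta\equiv 0$ off $B_{2R}$, combined with $|x-z|<1$, confine the nonvanishing part of the integrand to the set where $x\in B_{2R+1}\setminus B_{R-1}$, an annulus of two-dimensional area $O(R^2)$. A change of variables $y=x-z$ together with the second moment bound in (H1) then yields
\[
\iint(\eta(x)-\eta(z))^2 K(x-z)\,dx\,dz\;\le\;\frac{C}{R^2}\,|B_{2R+1}\setminus B_{R-1}|\int_{B_1}|y|^2 K(y)\,dy\;\le\;C,
\]
the $R^{-2}$ from the Lipschitz bound on $\eta$ exactly canceling the $R^2$ area of the annulus. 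This scale cancellation is a two-dimensional feature and the reason the symmetry result is established in $\R^2$.

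The main obstacle is the absorption step: $\sigma$ is not a priori bounded, so one must pair each factor $\sigma(x)$ or $\sigma(z)$ with both copies of $\varphi$ and invoke Harnack to transfer $\varphi(z)$ into a multiple of $\varphi(x)$. This is precisely where (H1) (compact support of $K$ in the unit ball, so that Harnack only needs to be applied on unit-size balls) and (H2) (Harnack for the linearized equation with bounded zero-order potential $-f'(u)$, available thanks to local Lipschitz continuity of $f$) enter essentially. Once $\sigma^2\varphi\cdot\varphi$ is bounded by a constant on the support, the remainder is standard two-dimensional energy bookkeeping.
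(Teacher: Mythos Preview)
Your argument is correct and follows essentially the same route as the paper: bound $(\sigma(x)+\sigma(z))^2\varphi(x)\varphi(z)$ pointwise on the support of $K$ via $|\nabla u|\in L^\infty$ together with the Harnack inequality (H2), then control the remaining cutoff integral using $|\nabla\eta|\le C/R$, the second moment bound on $K$ from (H1), and the $O(R^2)$ area of the relevant region in $\R^2$. The only cosmetic differences are that the paper expands the square as $C/\varphi^2(x)+C/\varphi^2(x+y)$ and bounds the resulting ratio $\varphi(x)/\varphi(x+y)+\varphi(x+y)/\varphi(x)$ by Harnack, whereas you split the square first and then absorb, and the paper uses the cruder containment $x\in B_{3R}$ in place of your annulus $B_{2R+1}\setminus B_{R-1}$; neither changes the substance of the proof.
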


\begin{proof}
Recall that, thanks to (H1), we have that $K\equiv0$ in $\R^2\setminus B_1$.
Moreover,
\[\bigl(\eta(x)-\eta(x+y)\bigr)^2\leq \frac{C|y|^2}{R^2}\,\chi_{B_{3R}}(x)\qquad \textrm{for}\ |y|\leq 1.\]
Therefore, it suffices to bound
\[\frac{1}{R^2}\int_{B_{3R}}dx\int_{B_1}dy\,\bigl(\sigma(x)+\sigma(x+y)\bigr)^2\varphi(x)\varphi(x+y)\,|y|^2K(y).\]
But notice that, since $|\nabla u|\leq C$, then
\[|\sigma|=\frac{|\nabla u|}{\varphi}\leq \frac{C}{\varphi},\]
and thus
\[\bigl(\sigma(x)+\sigma(x+y)\bigr)^2\leq \frac{C}{\varphi^2(x)}+ \frac{C}{\varphi^2(x+y)}.\]
This yields
\[\bigl(\sigma(x)+\sigma(x+y)\bigr)^2\varphi(x)\varphi(x+y)\leq C\left(\frac{\varphi(x)}{\varphi(x+y)}+\frac{\varphi(x+y)}{\varphi(x)}\right).\]

Now, by the Harnack inequality (H2), we have
\[\frac{\varphi(x)}{\varphi(x+y)}+\frac{\varphi(x+y)}{\varphi(x)}\leq C\]
for all $x\in \R^2$ and $y\in B_1$.
Hence, we have
\[\begin{split}
&\frac{1}{R^2}\int_{B_{3R}}dx\int_{B_1}dy\,\bigl(\sigma(x)+\sigma(x+y)\bigr)^2\varphi(x)\varphi(x+y)\,|y|^2K(y) \\
&\qquad \qquad \leq \frac{C}{R^2}\int_{B_{3R}}dx\int_{B_1}dy\,\,|y|^2K(y)\leq \frac{C}{R^2}\,|B_{3R}|\leq C,\end{split}\]
and the Lemma is proved.
\end{proof}

We can now give the:

\begin{proof}[Proof of Proposition \ref{Step2}]
First, symmetrizing \eqref{strange-equation} in $x$ and $z$, we get
\[\int_{\R^2}\int_{\R^2}\bigl(\eta^2(x)\sigma(x)-\eta^2(z)\sigma(z)\bigr)\bigl(\sigma(x)-\sigma(z)\bigr)\varphi(x)\varphi(z)K(x-z)dz\,dx=0\]
for all $\eta\in C^\infty_c(\R^2)$.
Thus, using that
\[2\bigl(\eta^2(x)\sigma(x)-\eta^2(z)\sigma(z)\bigr)=\bigl(\sigma(x)-\sigma(z)\bigr)\bigl(\eta^2(x)+\eta^2(z)\bigr)+
\bigl(\sigma(x)+\sigma(z)\bigr)\bigl(\eta^2(x)-\eta^2(z)\bigr),\]
we find the new equation
\begin{equation}\label{aust}\begin{split}
I(R)&:=\int_{\R^2}\int_{\R^2}\bigl(\sigma(x)-\sigma(z)\bigr)^2\bigl(\eta^2(x)+\eta^2(z)\bigr)\varphi(x)\varphi(z)K(x-z)dz\,dx  \\
&= - \int_{\R^2}\int_{\R^2}\bigl(\sigma^2(x)-\sigma^2(z)\bigr)\bigl(\eta^2(x)-\eta^2(z)\bigr)\varphi(x)\varphi(z)K(x-z)dz\,dx.
\end{split}\end{equation}

Let us now take $\eta$ such that
\[\eta\equiv1\quad\textrm{in}\ B_R,\qquad \eta\equiv0\quad\textrm{in}\ \R^2\setminus B_R,\]
with $R>1$ is large enough.
Then, using the Cauchy-Schwarz inequality and that $\bigl(\eta(x)+\eta(z)\bigr)^2\leq 2\bigl(\eta^2(x)+\eta^2(z)\bigr)$,
\begin{equation}\label{Ambrosio-Cabre}\begin{split}
|I(R)|^2&\leq \left(\int\int_{\{\eta(x)\neq\eta(z)\}}\bigl(\sigma(x)-\sigma(z)\bigr)^2\bigl(\eta^2(x)+\eta^2(z)\bigr)\varphi(x)\varphi(z)K(x-z)dx\,dz \right)\\
&\qquad\qquad \times \left(\int_{\R^2}\int_{\R^2}\bigl(\sigma(x)+\sigma(z)\bigr)^2\bigl(\eta(x)-\eta(z)\bigr)^2\varphi(x)\varphi(z)K(x-z)dx\,dz \right).
\end{split}\end{equation}
Moreover, by Lemma \ref{key-bound}, the last integral is bounded (uniformly in $R$).

This means that $I(R)\leq C$, and letting $R\rightarrow\infty$,
\[\int_{\R^2}\int_{\R^2}\bigl(\sigma(x)-\sigma(z)\bigr)^2\varphi(x)\varphi(z)K(x-z)dz\,dx\leq C.\]
But then, letting $R\rightarrow \infty$ in \eqref{Ambrosio-Cabre}, we have
\[\int\int_{\{\eta(x)\neq\eta(z)\}}\bigl(\sigma(x)-\sigma(z)\bigr)^2\bigl(\eta^2(x)+\eta^2(z)\bigr)\varphi(x)\varphi(z)K(x-z)dx\,dz\longrightarrow 0,\]
and therefore,
\[\int_{\R^2}\int_{\R^2}\bigl(\sigma(x)-\sigma(z)\bigr)^2\varphi(x)\varphi(z)K(x-z)dz\,dx=0.\]
Since $\varphi>0$, this means that $\sigma$ is constant, and the Proposition is proved.
\end{proof}

Finally, we give the:

\begin{proof}[Proofs of Theorem \ref{thm1} and Corollary \ref{cor1}]
The result follows from Propositions \ref{Step1} and \ref{Step2}.
Indeed, using these results we find that any partial derivative $\partial_{x_i}u$ satisfies
\[\sigma=\frac{\partial_{x_i}u}{\varphi}=c_i\]
for some constants $c_1$ and $c_2$.

This means that $\partial_e u\equiv0$ in $\R^2$ for $e=c_2e_1-c_1e_2$, and thus $u$ is a 1D solution.
\end{proof}

\begin{remark}
It is important to remark that that for monotone solutions (say in the $x_2$ direction), assumption (H3) is actually not needed. 
One just needs to take $\varphi=\partial_{x_2} u$ in the previous argument, in which we did not used (H3). 
However, this assumption is required for the argument in the next section.
\end{remark}

\section{A characterization of stability}
\label{sec3}

This section is devoted to the following lemma.

\begin{lemma}\label{linear}
Assume that (H2) and (H3) hold, and let $u$ be any bounded solution to~\eqref{pb}.
Then, the following statements are equivalent.
\begin{itemize}
\item[(i)] The following inequality
\[\frac12\int_{\R^2}\int_{\R^2}\bigl( \xi(x)-\xi(x+y) \bigr)^2 K(y)\,dz\,dx \geq \int_{\R^2} f'(u)\xi^2\]
holds for every $\xi \in C^\infty_c(\R^2)$.

\item[(ii)] There exists a bounded solution $\varphi>0$ to
\[L \varphi=f'(u)\varphi\quad\textrm{in}\ \R^2\]
i.e. $\varphi$ solves the linearization of the problem \eqref{pb}.
\end{itemize}
\end{lemma}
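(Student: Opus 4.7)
The plan is to prove (ii) $\Rightarrow$ (i) via a nonlocal Picone-type inequality, and to attempt (i) $\Rightarrow$ (ii) by a compactness argument using first Dirichlet eigenfunctions on expanding balls, combined with (H2) and (H3).

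For (ii) $\Rightarrow$ (i), given $\varphi > 0$ bounded solving $L\varphi = f'(u)\varphi$, I would test this equation against $\xi^2/\varphi$ for $\xi \in C_c^\infty(\R^2)$; this is admissible because Harnack (H2) keeps $\varphi$ bounded below on compact sets. Symmetrization of the resulting double integral using $K(y) = K(-y)$ gives
\[\int_{\R^2} f'(u)\xi^2 = \frac12 \int_{\R^2}\int_{\R^2} \bigl(\varphi(x)-\varphi(x+y)\bigr)\left(\frac{\xi^2(x)}{\varphi(x)} - \frac{\xi^2(x+y)}{\varphi(x+y)}\right)K(y)\,dy\,dx.\]
The inequality in (i) then follows from the pointwise nonlocal Picone bound
\[(a-b)\left(\frac{p^2}{a} - \frac{q^2}{b}\right) \leq (p-q)^2, \qquad a, b > 0,\ p, q \in \R,\]
which reduces after expansion to $ap^2/b + bq^2/a \geq 2pq$, a direct AM--GM consequence.

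For (i) $\Rightarrow$ (ii), I would set up the first Dirichlet eigenvalue problem for $L - f'(u)$ on $B_R$. Let
\[\lambda_R := \inf\left\{ \tfrac12 \int\!\int \bigl(\psi(x)-\psi(x+y)\bigr)^2 K(y)\,dy\,dx - \int f'(u)\psi^2 \,:\, \psi\in H_K(\R^2),\ \operatorname{supp}\psi\subset\overline{B_R},\ \|\psi\|_{L^2}=1 \right\}.\]
The compact embedding $H_K \hookrightarrow L^2$ in (H3) ensures the infimum is attained by some $\varphi_R \geq 0$, which by the standard sign-flip argument may be taken positive inside $B_R$ and which satisfies $L\varphi_R = (f'(u) + \lambda_R)\varphi_R$ in $B_R$. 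Stability (i) gives $\lambda_R \geq 0$, and monotonicity in $R$ yields $\lambda_\infty := \lim_R \lambda_R \geq 0$. Normalizing $\varphi_R(0) = 1$, Harnack (H2) applied along chains of unit balls provides uniform local $L^\infty$ bounds and (H3) gives local equicontinuity; a diagonal extraction then delivers a locally uniform limit $\varphi > 0$ on $\R^2$ satisfying $L\varphi = (f'(u) + \lambda_\infty)\varphi$.

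The crucial step, and the main obstacle, will be to show simultaneously that $\lambda_\infty = 0$ (so that $\varphi$ truly solves the unshifted linearization) and that $\varphi$ is globally bounded. For $\lambda_\infty = 0$ I would compare the test function $\varphi_R$ with a cut-off of the limiting $\varphi$ on a much larger ball $B_{R'}$: since $\varphi$ already solves the shifted equation on the whole plane, it provides a near-optimal competitor in the Rayleigh quotient on $B_{R'}$, which combined with the monotonicity of $\lambda_R$ should force $\lambda_\infty = 0$. For global boundedness, I would renormalize $\sup_{\R^2}\varphi_R = 1$ (so any locally uniform limit lies in $[0,1]$ automatically); since $\varphi_R$ is compactly supported in $\overline{B_R}$ the maximum is attained at some $x_R$, and a concentration argument should show that $\{x_R\}$ stays bounded, or else that translating by $x_R$ and passing to a subsequential limit of $f'(u(\cdot + x_R))$ (using $f'(u)\in L^\infty$) produces a bounded positive solution in the translated frame. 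Positivity of the final $\varphi$ then follows again from Harnack. Handling these two interrelated points carefully is the heart of the argument.
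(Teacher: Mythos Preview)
Your proof of (ii) $\Rightarrow$ (i) is essentially the paper's: test with $\xi^2/\varphi$, symmetrize, and invoke the pointwise Picone inequality $(a-b)\bigl(p^2/a-q^2/b\bigr)\le (p-q)^2$. The paper spends a page deriving this inequality by hand, but the content is the same.

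For (i) $\Rightarrow$ (ii) your strategy diverges from the paper's, and the divergence is where the gap sits. You pass the Dirichlet eigenfunctions $\varphi_R$ (solving $L\varphi_R=(f'(u)+\lambda_R)\varphi_R$ in $B_R$, $\varphi_R=0$ outside) to a limit $\varphi$ solving the \emph{shifted} equation $L\varphi=(f'(u)+\lambda_\infty)\varphi$, and then must argue $\lambda_\infty=0$. Your proposed argument for this is circular: a cutoff of $\varphi$ used as a competitor in the Rayleigh quotient on $B_{R'}$ only yields $\lambda_{R'}\le \lambda_\infty+o(1)$, which you already know from monotonicity. Nothing here forces $\lambda_\infty=0$, and in fact for a general bounded potential $V$ in place of $f'(u)$ the limit of Dirichlet eigenvalues can be strictly positive. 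Your boundedness argument has a related defect: switching normalization to $\sup\varphi_R=1$ and translating by the maximum point $x_R$ replaces $f'(u)$ by a subsequential limit of $f'(u(\cdot+x_R))$, so the $\varphi$ you produce need not solve the linearization at the original $u$.

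The paper sidesteps both difficulties with a single device: after showing $\lambda_R>0$ (via a strict monotonicity argument, $\lambda_{R'}<\lambda_R$ for $R'>R$, using that $L\phi_R<0$ in the annulus $B_{R'}\setminus B_R$), it abandons the eigenfunctions and instead solves the \emph{unshifted} problem
\[
L\varphi_R=f'(u)\varphi_R\ \text{in}\ B_R,\qquad \varphi_R=c_R>0\ \text{in}\ \R^2\setminus B_R,
\]
choosing $c_R$ so that $\varphi_R(0)=1$. The strict positivity $\lambda_R>0$ guarantees coercivity (hence existence) and, via testing with $\varphi_R^-$, positivity of $\varphi_R$. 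Since each $\varphi_R$ already solves the correct equation, the limit does too, and no $\lambda_\infty$ ever appears. This is the missing idea in your outline.
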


\begin{proof}
Let us first show that (ii) $\Longrightarrow$ (i).

Let $\xi\in C^\infty_c(\R^2)$.
Using $\xi^2/\varphi$ as a test function in the equation $L\varphi=f'(u)\varphi$, we find
\[\int_{\R^2}f'(u)\xi^2=\int_{\R^2}\frac{\xi^2}{\varphi}\,L\varphi.\]
Next, we use the integration by parts type formula
\[\int_{\R^2}v\,Lw=\frac12\int_{\R^2} B(v,w),\]
where
\[B(v,w)(x):=\int_{\R^2}\int_{\R^2}\bigl(v(x)-v(y)\bigr)\bigl(w(x)-w(y)\bigr)K(x-y)dy.\]
We find
\[\int_{\R^2}f'(u)\xi^2=\frac12\int_{\R^2}B\left(\varphi,\,\xi^2/\varphi\right).\]
Now, it is immediate to check that
\[\frac{\xi^2(x)}{\varphi(x)}-\frac{\xi^2(y)}{\varphi(y)}=\left(\xi^2(x)-\xi^2(y)\right)\frac{\varphi(x)+\varphi(y)}{2\varphi(x)\varphi(y)}
-\left(\varphi(x)-\varphi(y)\right)\frac{\xi^2(x)+\xi^2(y)}{2\varphi(x)\varphi(y)},\]
and this yields
\[\begin{split}
2\int_{\R^2}f'(u)\xi^2=&
\int_{\R^2}\int_{\R^2}\bigl(\varphi(x)-\varphi(y)\bigr)\left(\xi^2(x)-\xi^2(y)\right)\frac{\varphi(x)+\varphi(y)}{2\varphi(x)\varphi(y)}\,K(x-y)dx\,dy\\
&-\int_{\R^2}\int_{\R^2}\bigl(\varphi(x)-\varphi(y)\bigr)^2\ \frac{\xi^2(x)+\xi^2(y)}{2\varphi(x)\varphi(y)}K(x-y)dx\,dy.
\end{split}\]
Let us now show that
\begin{equation}\label{volem}
\begin{split}
&\Theta(x,y):=\bigl(\varphi(x)-\varphi(y)\bigr)\left(\xi^2(x)-\xi^2(y)\right)\frac{\varphi(x)+\varphi(y)}{2\varphi(x)\varphi(y)}\\
&\qquad\qquad\qquad\qquad\qquad-\bigl(\varphi(x)-\varphi(y)\bigr)^2\ \frac{\xi^2(x)+\xi^2(y)}{2\varphi(x)\varphi(y)}\leq \bigl(\xi(x)-\xi(y)\bigr)^2.
\end{split}\end{equation}
Once this is proved, then we will have
\[2\int_{\R^2} f'(u)\xi^2\leq \int_{\R^2}\int_{\R^2}\bigl( \xi(x)-\xi(y) \bigr)^2 K(x-y)dx\,dy,\]
and thus the result will be proved.

To establish \eqref{volem}, it is convenient to write $\Theta$ as
\[\begin{split}
&\Theta(x,y)=2\bigl(\varphi(x)-\varphi(y)\bigr)\bigl(\xi(x)-\xi(y)\bigr)\frac{\xi(x)+\xi(y)}{\varphi(x)+\varphi(y)}\cdot
\frac{\bigl(\varphi(x)+\varphi(y)\bigr)^2}{4\varphi(x)\varphi(y)}\\
&\qquad\qquad-\bigl(\varphi(x)-\varphi(y)\bigr)^2\cdot\left(\frac{\xi(x)+\xi(y)}{\varphi(x)+\varphi(y)}\right)^2 \frac{2\xi^2(x)+2\xi^2(y)}{\bigl(\xi(x)+\xi(y)\bigr)^2}\cdot\frac{\bigl(\varphi(x)+\varphi(y)\bigr)^2}{4\varphi(x)\varphi(y)}.
\end{split}\]
Now, using the inequality
\[\begin{split}
&2\bigl(\varphi(x)-\varphi(y)\bigr)\bigl(\xi(x)-\xi(y)\bigr)\frac{\xi(x)+\xi(y)}{\varphi(x)+\varphi(y)}\leq\\
&\qquad\qquad\qquad\qquad\qquad \leq \bigl(\xi(x)-\xi(y)\bigr)^2+\bigl(\varphi(x)-\varphi(y)\bigr)^2\cdot\left(\frac{\xi(x)+\xi(y)}{\varphi(x)+\varphi(y)}\right)^2,
\end{split}\]
we find
\[\begin{split}
&\qquad\Theta(x,y)\leq \bigl(\xi(x)-\xi(y)\bigr)^2\frac{\bigl(\varphi(x)+\varphi(y)\bigr)^2}{4\varphi(x)\varphi(y)}\,+\\
&+\bigl(\varphi(x)-\varphi(y)\bigr)^2\cdot\left(\frac{\xi(x)+\xi(y)}{\varphi(x)+\varphi(y)}\right)^2\cdot
\frac{\bigl(\varphi(x)+\varphi(y)\bigr)^2}{4\varphi(x)\varphi(y)}\cdot\left\{1-\frac{2\xi^2(x)+2\xi^2(y)}{\bigl(\xi(x)+\xi(y)\bigr)^2}\right\}.
\end{split}\]
But since
\[1-\frac{2\xi^2(x)+2\xi^2(y)}{\bigl(\xi(x)+\xi(y)\bigr)^2}=-\,\frac{\bigl(\xi(x)-\xi(y)\bigr)^2}{\bigl(\xi(x)+\xi(y)\bigr)^2},\]
then
\[\begin{split}
\Theta(x,y)&\leq
\bigl(\xi(x)-\xi(y)\bigr)^2\frac{\bigl(\varphi(x)+\varphi(y)\bigr)^2}{4\varphi(x)\varphi(y)} - \bigl(\varphi(x)-\varphi(y)\bigr)^2\cdot \frac{\bigl(\xi(x)-\xi(y)\bigr)^2}{4\varphi(x)\varphi(y)}\\
&=\frac{\bigl(\xi(x)-\xi(y)\bigr)^2}{4\varphi(x)\varphi(y)}\left\{\bigl(\varphi(x)+\varphi(y)\bigr)^2-\bigl(\varphi(x)-\varphi(y)\bigr)^2\right\}\\
&= \bigl(\xi(x)-\xi(y)\bigr)^2.\end{split}\]
Hence \eqref{volem} is proved, and the result follows.

\vspace{4mm}

Let us now show that (i) $\Longrightarrow$ (ii). Assume (i) holds.

Let $\xi$ be a smooth compactly supported function in $\mathbb R^2$ and consider the quadratic form
$$
\mathcal Q_R(\xi)=\frac12\int_{\mathbb R^n} \int_{\mathbb R^2}\bigl( \xi(x)-\xi(x+y)\bigr)^2 K(y)\,dx\,dy-\int_{B_R}f'(u)\xi^2\,dx.
$$
Let us define $H_K(\R^2)$ as the closure of $C^\infty_c(\R^2)$ under the norm
\[\|w\|_{H_K(\R^n)}:=\int_{\mathbb R^n} \int_{\mathbb R^n}\bigl(w(x)-w(x+y)\bigr)^2 K(y)\,dx\,dy.\]

Let $\lambda_R$ be the infimum of $\mathcal Q_R$ among the class $\mathcal S_R$ defined by
$$
\mathcal S_R= \left \{ \xi \in H_K(\mathbb R^2)\,:\,\,\,\xi\equiv 0\,\,\mbox{in}\,\,\mathbb R^n \backslash B_R,\,\,\int_{B_R} \xi^2 =1 \right \}
$$
Since the functional $\mathcal Q_R$ is bounded from below (since $f'(u)$ is bounded) and thanks to the compactness assumption in (H3), the infimum $\lambda_R$ is attained for a function $\phi_R \in \mathcal S_R$.
Moreover, because of the stability condition (i), we have $\lambda_R\geq0$.

It is easy to see that $\phi_R \geq 0$ ---since if $\phi$ is minimizer then $|\phi|$ is also a minimizer.
Thus, the function $\phi_R\geq0$ is a solution, not identically zero, of the problem
\[\left \{
\begin{array}{c}
L \phi_R=f'(u)\phi_R+\lambda_R \phi_R , \,\,\,\mbox{in}\,\,\,B_R,\\
\phi_R=0\,\,\,\mbox{in}\,\,\,\mathbb R^n \backslash B_R.
\end{array} \right.\]
It follows from the strong maximum principle that $\phi_R>0$ in $B_R$.

On the other hand, for any $R<R'$ we have
\[\int_{B_{R'}}\phi_{R}L\phi_{R'}=\int_{B_{R'}}\phi_{R'}L\phi_{R}<\int_{B_{R}} \phi_{R'}L\phi_{R}.\]
This inequality follows from the fact that $\phi_{R}=0$ in $B_{R'}\setminus B_{R}$, and thus $L\phi_{R}<0$ in that annulus.
Hence, using the equations for $\phi_{R}$ and $\phi_{R'}$ we deduce that
\[\lambda_{R'}\int_{B_{R}}\phi_{R}\phi_{R'}<\lambda_{R}\int_{B_{R}}\phi_{R}\phi_{R'}.\]
Therefore, $\lambda_{R'}<\lambda_{R}$ for any $R'>R$.
In particular, $\lambda_R>0$ for all $R$.

Now consider the problem
\[\left \{
\begin{array}{c}
L \varphi_R=f'(u)\varphi_R , \,\,\,\mbox{in}\,\,\,B_R,\\
\varphi_R=c_R\,\,\,\mbox{in}\,\,\,\mathbb R^2 \backslash B_R.,
\end{array} \right.\]
for $c_R>0$.
The solution to this problem can be found by writing $\psi_R=\varphi_R-c_R$, which solves
\[\left \{
\begin{array}{c}
L \psi_R=f'(u)\psi_R+c_Rf'(u) , \,\,\,\mbox{in}\,\,\,B_R,\\
\psi_R=0\,\,\,\mbox{in}\,\,\,\mathbb R^n \backslash B_R.
\end{array} \right.\]
It is immediate to check that the energy functional associated to this problem is bounded from below and coercive, thanks to the inequality $\lambda_R>0$.

Next we claim that $\varphi_R>0$ in $B_R$.
To show this, we use $\varphi_R^-$ as a test function for the equation for $\varphi_R$.
We find
\[\begin{split}
\frac12\int_{\mathbb R^n} \int_{\mathbb R^n}\bigl(\varphi_R(x)-\varphi_R(x+y)\bigr)\bigl(\varphi_R^-(x)-\varphi_R^-(x+y)\bigr) K(y)\,dx\,dy&=\int_{B_R}f'(u)\varphi_R\varphi_R^-\\
&=-\int_{B_R}f'(u)|\varphi_R^-|^2.
\end{split}\]
Now, since
\[\bigl(\varphi_R(x)-\varphi_R(x+y)\bigr)\bigl(\varphi_R^-(x)-\varphi_R^-(x+y)\bigr)\leq -\bigl(\varphi_R^-(x)-\varphi_R^-(x+y)\bigr)^2,\]
this yields
\[\mathcal Q_R(\varphi_R^-)=\frac12\int_{\mathbb R^n} \int_{\mathbb R^n}\bigl(\varphi_R^-(x)-\varphi_R^-(x+y)\bigr)^2 K(y)\,dx\,dy-\int_{B_R}f'(u)|\varphi_R^-|^2\,dx\leq 0.\]
Since $\lambda_R>0$, this means that $\varphi_R^-\equiv0$, and thus $\varphi_R\geq0$.
By the strong maximum principle, $\varphi_R>0$ in $B_R$.

Finally, let us choose the constant $c_R>0$ so that $\varphi_R(0)=1$.
Then, by Harnack inequality (H2) and the H\"older regularity (H3), the function $\psi_R$ converges to a function $\varphi>0$ in $\RR^2$ and satisfying (ii).
\end{proof}

\section{Satisfying assumption (H2)}

In this section, we comment on the validity of the Harnack inequality with a zero order. Consider equation in $(H2)$, i.e.
$$
L\varphi= f'(u)\varphi
$$
where $\varphi >0$ in $\mathbb R^2$ and $L$ has compact support.
Following the approach of \cite{CKP}, the crucial part to prove the Harnack inequality we need is a logarithmic lemma.
They proved:

\begin{lemma}[\cite{CKP2}]\label{log}
Let $L$ be an operator of the form
\[Lu(x)={\rm PV}\int_{\R^n}\bigl(u(x)-u(x+y)\bigr)K(x,y)dy,\]
with $K$ satisfying
\[\frac{\lambda}{|y|^{n+2s}}\,\chi_{B_1}(y)\leq K(x,y)\leq \frac{\Lambda}{|y|^{n+2s}},\qquad s\in(0,1).\]
Let $u \in H^s{(\mathbb R^n})$ be a weak solution of
\[Lu =0\,\,\mbox{in}\,\,\Omega\]
with suitable boundary conditions in $\R^n\setminus\Omega$.
Assume that $u \geq 0$ in $\mathbb R^2$.
Then for any $r>0$ and any $d>0$
$$
\int_{B_r} \int_{B_r} K(x,y)\left| \log \left  ( \frac{d+u(x)}{d+u(y)}\right)\right|^2 \leq Cr^{n-2s}
$$
for some constant $C$.
\end{lemma}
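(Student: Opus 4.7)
The plan is to follow the logarithmic Caccioppoli strategy pioneered in the nonlocal setting by Di Castro--Kuusi--Palatucci. The idea is to test the weak formulation of $Lu = 0$ against a nonlinear test function that isolates $\log(d+u)$. Concretely, I would fix a smooth cut-off $\eta \in C^\infty_c(B_{3r/2})$ with $\eta \equiv 1$ on $B_r$, $0 \leq \eta \leq 1$ and $|\nabla\eta| \leq C/r$, and use $\varphi := \eta^2/(d+u)$ as test function. Since $u \geq 0$ and $d > 0$, $\varphi$ is bounded and compactly supported, hence admissible. Writing $a = d+u(x)$, $b = d+u(y)$, $\eta_1 = \eta(x)$, $\eta_2 = \eta(y)$, the weak identity becomes
\[\int_{\R^n}\int_{\R^n}(a-b)\left(\frac{\eta_1^2}{a}-\frac{\eta_2^2}{b}\right)K(x,y)\,dx\,dy = 0.\]

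The heart of the matter is then the pointwise algebraic inequality
\[(a-b)\left(\frac{\eta_2^2}{b}-\frac{\eta_1^2}{a}\right) \geq c\,\eta_1\eta_2\left(\log\frac{a}{b}\right)^2 - C(\eta_1-\eta_2)^2, \qquad a,b > 0,\ \eta_1,\eta_2 \geq 0.\]
To establish it I would start from the decomposition $\eta_2^2/b - \eta_1^2/a = \eta_1\eta_2(a-b)/(ab) - (\eta_1-\eta_2)(\eta_1/a + \eta_2/b)$, which gives
\[(a-b)\left(\frac{\eta_2^2}{b}-\frac{\eta_1^2}{a}\right) = \eta_1\eta_2\,\frac{(a-b)^2}{ab} - (a-b)(\eta_1-\eta_2)\left(\frac{\eta_1}{a}+\frac{\eta_2}{b}\right),\]
bound the leading term from below by $\eta_1\eta_2(\log(a/b))^2$ via the elementary identity $(a-b)^2/(ab) = (\sqrt{a/b}-\sqrt{b/a})^2 = 4\sinh^2(\log(a/b)/2) \geq (\log(a/b))^2$, and absorb the cross term by a Young's inequality into a small fraction of the leading term plus a $C(\eta_1-\eta_2)^2$ remainder. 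Integrating this pointwise inequality against $K$ and combining with the weak identity yields
\[\int_{B_r}\int_{B_r}K(x,y)\left|\log\frac{d+u(x)}{d+u(y)}\right|^2 dx\,dy \leq C\int_{\R^n}\int_{\R^n}K(x,y)\bigl(\eta(x)-\eta(y)\bigr)^2 dx\,dy,\]
since $\eta_1\eta_2 \equiv 1$ on $B_r \times B_r$.

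The cut-off integral on the right is bounded by $Cr^{n-2s}$ by the standard Caccioppoli computation: split according to $|x-y| \lessgtr r$, apply $|\eta(x)-\eta(y)| \leq \min(2,\,C|x-y|/r)$ together with the upper bound $K(x,y) \leq \Lambda|x-y|^{-n-2s}$, and integrate in polar coordinates. The main obstacle I expect is the algebraic inequality above: the Young absorption of the cross term $(a-b)(\eta_1-\eta_2)(\eta_1/a+\eta_2/b)$ is delicate, because the naive splitting produces the a priori unbounded factor $(\eta_1/a+\eta_2/b)^2\,ab/(\eta_1\eta_2)$. Following \cite{CKP}, one circumvents this by a case analysis according to whether $a \lessgtr b$ and on the ratio $\eta_1/\eta_2$, tuning the Young weight so that only controllable combinations appear; the linearity of the equation (as opposed to the $p$-fractional setting) keeps this bookkeeping clean. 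Once the algebraic inequality is in hand, the rest of the proof is routine.
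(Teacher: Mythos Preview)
Your proposal is correct and follows precisely the approach the paper itself invokes: the paper does not give a self-contained proof of this lemma but cites \cite{CKP2} and remarks that the argument ``is based on a test function argument multiplying the equation by $\phi^2/u$ where $\phi$ is a standard cut-off'' --- exactly the test function $\eta^2/(d+u)$ you use. Your outline supplies the algebraic details (the decomposition of $(a-b)(\eta_2^2/b-\eta_1^2/a)$, the bound $(a-b)^2/(ab)\geq(\log(a/b))^2$, and the Young-absorption/case analysis for the cross term) that the paper leaves implicit by pointing to the reference.
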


The previous lemma is based on a test function argument multiplying the equation by $\phi^2 /u$ where $\phi$ is a standard cut-off. In our case since the zero order term $c(x)$ is bounded, the same proof holds paying the price of an additional term in the right hand side.
However, since our kernel is compactly supported then this term is uniformly controlled, and hence we get for this class of kernels the desired assumption (H2).
\begin{remark}
Using the so-called Caffarelli-Silvestre extension \cite{cafSil}, such an Harnack inequality has been proved in \cite{CabS1} and \cite{TX}. 
\end{remark}

\section*{Acknowledgements}

Y.S. is supported by the ANR grants "HAB" and "NONLOCAL" and the ERC grant "EPSILON".

X.R. was supported by grants MTM2011-27739-C04-01 (Spain), and 2009SGR345 (Catalunya)

\bibliographystyle{alpha}
\bibliography{biblio}

\begin{thebibliography}{DCKP15}

\bibitem[AC00]{AC}
Luigi Ambrosio and Xavier Cabr{\'e}.
\newblock Entire solutions of semilinear elliptic equations in {$ \mathbb R\sp
  3$} and a conjecture of {D}e {G}iorgi.
\newblock {\em J. Amer. Math. Soc.}, 13(4):725--739 (electronic), 2000.

\bibitem[CC10]{CabC}
Xavier Cabr\'e and Eleonora Cinti.
\newblock Energy estimates and 1-d symmandry for nonlinear equations involving
  the half-laplacian.
\newblock {\em Discrande and Continuous Dynamical Systems}, 28:1179--1206,
  2010.

\bibitem[CC14]{cabre}
Xavier Cabr{\'e} and Eleonora Cinti.
\newblock Sharp energy estimates for nonlinear fractional diffusion equations.
\newblock {\em Calc. Var. Partial Differential Equations}, 49(1-2):233--269,
  2014.

\bibitem[CS07]{cafSil}
Luis Caffarelli and Luis Silvestre.
\newblock An extension problem related to the fractional {L}aplacian.
\newblock {\em Comm. Partial Differential Equations}, 32(7-9):1245--1260, 2007.

\bibitem[CS14]{CabS1}
Xavier Cabr{\'e} and Yannick Sire.
\newblock Nonlinear equations for fractional {L}aplacians, {I}: {R}egularity,
  maximum principles, and {H}amiltonian estimates.
\newblock {\em Ann. Inst. H. Poincar\'e Anal. Non Lin\'eaire}, 31(1):23--53,
  2014.

\bibitem[CS15]{cabS2}
Xavier Cabr{\'e} and Yannick Sire.
\newblock Nonlinear equations for fractional {L}aplacians {II}: {E}xistence,
  uniqueness, and qualitative properties of solutions.
\newblock {\em Trans. Amer. Math. Soc.}, 367(2):911--941, 2015.

\bibitem[CSV15]{CSV}
E.~Cinti, J.~Serra, and E.~Valdinoci.
\newblock Personal communication.
\newblock 2015.

\bibitem[DCKP14]{CKP}
Agnese Di~Castro, Tuomo Kuusi, and Giampiero Palatucci.
\newblock Nonlocal {H}arnack inequalities.
\newblock {\em J. Funct. Anal.}, 267(6):1807--1836, 2014.

\bibitem[DCKP15]{CKP2}
A.~Di~Castro, T.~Kuusi, and G.~Palattucci.
\newblock Local behavior of fractional p-minimizers.
\newblock {\em to appear Annales de l'IHP, anal. non ln\'eaire}, 2015.

\bibitem[DG79]{DeG}
Ennio De~Giorgi.
\newblock Convergence problems for functionals and operators.
\newblock In {\em Proceedings of the International Meeting on Recent Methods in
  Nonlinear Analysis (Rome, 1978)}, pages 131--188, Bologna, 1979. Pitagora.

\bibitem[GG98]{GG}
N.~Ghoussoub and C.~Gui.
\newblock On a conjecture of {D}e {G}iorgi and some related problems.
\newblock {\em Math. Ann.}, 311(3):481--491, 1998.

\bibitem[KM08]{KM}
Carlos~E. Kenig and Frank Merle.
\newblock Global well-posedness, scattering and blow-up for the energy-critical
  focusing non-linear wave equation.
\newblock {\em Acta Math.}, 201(2):147--212, 2008.

\bibitem[Sav09]{savin}
Ovidiu Savin.
\newblock Regularity of flat level sets in phase transitions.
\newblock {\em Ann. of Math. (2)}, 169(1):41--78, 2009.

\bibitem[Sil06]{silvestreIndiana}
Luis Silvestre.
\newblock H\"older estimates for solutions of integro-differential equations
  like the fractional {L}aplace.
\newblock {\em Indiana Univ. Math. J.}, 55(3):1155--1174, 2006.

\bibitem[SV09]{sire}
Yannick Sire and Enrico Valdinoci.
\newblock Fractional {L}aplacian phase transitions and boundary reactions: a
  geometric inequality and a symmetry result.
\newblock {\em J. Funct. Anal.}, 256(6):1842--1864, 2009.

\bibitem[TX11]{TX}
Jinggang Tan and Jingang Xiong.
\newblock A {H}arnack inequality for fractional {L}aplace equations with lower
  order terms.
\newblock {\em Discrete Contin. Dyn. Syst.}, 31(3):975--983, 2011.

\end{thebibliography}

\end{document}